\documentclass[reqno]{amsart}
\usepackage{graphicx}
\usepackage{amsthm,amsmath,verbatim,amssymb}
\usepackage{caption}
\usepackage{color}
\usepackage{arydshln}
\newtheorem{lem}{Lemma}
\usepackage{tikz}
\newtheorem{thm}{Theorem}
\newtheorem{cor}{Corollary}

\newcommand{\E}{\mathbb{E}}

\newcommand{\abs}[1]{\left\vert#1\right\vert}

\newcommand{\beq} {\begin{equation} }
	\newcommand{\eeq} {\end{equation} }

\begin{document}
	%\title[CLT of HPP]{Limit Theorems for Linear Statistics of Harmonic Point Process}
	
	\title[Smallest gaps]{Smallest gaps between zeros of stationary Gaussian processes}
	
	\author[Feng]{Renjie Feng}
	
	\author[G\"otze]{Friedrich G\"{o}tze}
	\author[Yao]{Dong Yao}

	\address{Sydney Mathematical Research Institute, The University of Sydeny, Australia.}
	\email{renjie.feng@sydney.edu.au}

	\address{Faculty of Mathematics, Bielefeld University, Germany.}
	
	\email{goetze@math.uni-bielefeld.de}
	
	\address{Research Institute of Mathematics, Jiangsu Normal University, China. }
	\email{dongyao@jsnu.edu.cn}

	\date{\today}
	\maketitle
	
	\begin{abstract}
		%	In this paper, we will study the smallest gaps of zeros of stationary Gaussian random processes.  We will prove that the smallest gaps after normalization will tend to a Poisson point process, and this implies the Gumbel distribution for the  smallest gaps in the limit. 
		
		%The results generalize these in \cite{FA, FXA} for random spherical harmonics   to random waves on any compact Riemannian manifold. 
		In this paper, we study the smallest gaps between successive zeros of nondegenerate smooth stationary centered Gaussian processes on the real line with the assumption that the covariance kernel $\kappa(x)$ and its derivatives  decay to 0 as $\abs{x}\to\infty$. We prove that, after rescaling, the smallest gaps converge to a Poisson point process with a specific rate. Moreover, the positions where these smallest gaps occur tend to a uniform distribution. Consequently, we can derive the limiting density for the $k$-th smallest gap. 
	\end{abstract}

	\section{Introduction}\label{intro}
	
	Let $f: \mathbb{R} \rightarrow \mathbb{R}$ be a  nondegenerate smooth stationary centered Gaussian process, i.e., for any fixed $t \in \mathbb{R}$, the shifted  process $x \mapsto f(x+t)$ has the same law as $f(\cdot)$.  The  correlation kernel of $f$ is  translation-invariant and is given by  a smooth function $\kappa: \mathbb{R} \rightarrow \mathbb{R}$ such that
	$$\mathbb{E}[f(x) f(y)]= \kappa(x-y) . $$
	%We say $f(x)$ is normalized if its correlation function $\kappa$ satisfies $\kappa(0)=1=-\kappa^{\prime \prime}(0)$.
	%Actually, any nondegenerate smooth stationary centered Gaussian process can be normalized. 
	For any  $n\in \mathbb{Z}_{\geq 0}$ and $\eta \in  \mathbb{R}_+$, we define  	$$\|\kappa \|_{n,\eta}:=\sup\Big\{ |\kappa^{(i)}(x)| \big | 0\leq i\leq n, |x|\geq\eta\Big\}.$$
	Throughout the paper, we always assume that the correlation kernel satisfies \beq\label{rapiddecay}\lim_{\eta\to\infty} \|\kappa \|_{n,\eta}\to 0, \quad \forall\, n \in \mathbb{Z}_{\geq 0}, \eeq
	i.e., the correlation kernel and its derivatives  decay to 0 as $\abs{x}\to\infty$. %A concrete example of such a correlation kernel is when $\kappa$ belongs to the Schwartz space.% The fact that $\kappa$ tends to 0 infinity ensures the non-degeneracy of the finite-dimensional marginal distributions of the process $f$. 
	%	Let $f:\mathbb R\to\mathbb R$ be a smooth stationary centered Gaussian process. Let $$\kappa: x\to \mathbb E [f(0)f(x)]$$ denote the 
	%correlation function of $f$. We assume that $f$ is normalized so that $$\kappa(0)=1=−\kappa''(0).$$  
	
	The zero set, denoted as $Z: = f^{-1}(0)$, is almost surely a closed discrete subset of $\mathbb{R}$. We define $Z_R$ as the set of  zeros that fall within the interval $[0, R]$, i.e., $$Z_R := [0, R] \cap Z.$$ Then the expected cardinality of $Z_R$ satisfies (e.g., Proposition 1.8 in \cite{AL})
	$$\mathbb{E}|Z_R| = R/\pi.$$
	Furthermore, Theorem 1.21 in \cite{AL} states that if $\kappa$ belongs to the Schwartz space (which satisfies \eqref{rapiddecay}), then for any test function $\phi(x)$ that is essentially bounded, integrable, and continuous almost everywhere, the linear statistics $\sum_{x_i \in Z_R} \phi(x_i)$ satisfies the central limit theorem as $R\to\infty$. As a result, the counting measure of the zeros exhibits the central limit theorem.
	
	In this paper, we  study the smallest gaps between successive zeros of smooth stationary centered Gaussian processes with the assumption \eqref{rapiddecay}.   Given zeros in $Z_R$, we can arrange them in ascending order as $x_1 < x_2 < \cdots < x_{|Z_R|}$. We define a two-dimensional point process of the rescaled gaps between successive zeros and their locations as follows: %$$\mathcal S_R:= \sum \delta_{R^{1/2}(x_{i+1}-x_i)}.$$then one has the convergence of the point process 
	\begin{equation}\label{sr}\mathcal S_R:= \sum_{x_i\in Z_R} \delta_{R^{1/2}(x_{i+1}-x_i),x_i/R }.
	\end{equation}
	%Given a bounded Borel set $A$, we define $\mathcal{S}_R(A)\subseteq Z_R$ as the set of zeros $x_i$ such that the difference between consecutive points, $x_{i+1} - x_i$, belongs to $R^{-1/2}A$, i.e.,  %let $\mathcal S_R(A)$ be the smallest gaps falling in $A$, i.e., 
	% \beq\label{sra}\mathcal S_R(A):=\{  x_i\in Z_R: x_{i+1}-x_i \in R^{-1/2}A\}.\eeq
	Our main result is  the convergence of the point process $\mathcal{S}_R$.
	Define a constant 
	\beq\label{constant}\alpha_0= \frac{\mathbb{E}\left[|f''(0)|^2 \mid f(0)=f'(0)=0\right]}{8\pi \sqrt{\operatorname{det}  \operatorname{Cov}\left(f(0),  f'(0)\right)}}. \eeq
	Clearly $\alpha_0$ only depends on the covariance kernel and its derivatives $\kappa^{(i)}(0)$  evaluated at $0$  up to the 4-th order.
	%For stationary Gaussian processes, the behavior of the 2-point correlation function has the asymptotic estimate (see \eqref{rho2eqsim} and Theorem \ref{vanishing} below),
	%\beq\label{sms}\rho_2(x,y)\simeq c |x-y|,\,\,\mbox{as}\,\, |x-y|\to 0,\eeq
	%where the constant $$c= \frac{\mathbb{E}\left[|f''(x)|^2 \mid f(x)=f'(x)=0\right]}{\sqrt{\operatorname{det}  \operatorname{Cov}\left(\left(f(x),  f'(x)\right)\right.}}$$ only depends on the correlation function and its derivatives at 0 up to order 4, by stationarity of the Gaussian process. 

	%A much deeper idea how the smallest gaps work in terms of the joint density, e.g., GOE case. 
	
	%The main result is, 

	\begin{thm}\label{main1} 
		%Let $\alpha_0$ be given by \eqref{constant}.
		As $R\to\infty$, the point process $\mathcal{S}_R$ converges in distribution to a Poisson point process $\mathcal{S}$ on $(0,\infty)\times [0,1]$ 
		with the rate 
		$$\mathbb E [\abs{\mathcal S(A\times B)}]=\left(\alpha_0\int_A udu\right) \times m(B) , $$
		for any bounded Borel sets $A\subset (0,\infty)$ and $B\subseteq [0,1]$. Here, $m(B)$ is the Lebesgue measure of $B$. 
	\end{thm}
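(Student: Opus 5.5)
We will prove the convergence of $\mathcal S_R$ by the method of factorial moments. It suffices to show that for disjoint rectangles $A_j\times B_j$, with $A_j=(a_j,b_j)\subset(0,\infty)$ bounded intervals and $B_j\subseteq[0,1]$ intervals, $j=1,\dots,m$, the joint factorial moments of the counts $\mathcal S_R(A_j\times B_j)$ converge to those of independent Poisson variables with means $\mu_j=\alpha_0\int_{A_j}u\,du\cdot m(B_j)$. Since Poisson laws are determined by their moments, this gives convergence in distribution, and hence convergence of the point process by Kallenberg's criterion.

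For fixed $x$, call the zero $x_i$ an \emph{$A$-pair} if $x_{i+1}-x_i\in R^{-1/2}A$. Such an event is essentially the event that $f$ has two zeros in a window of length $O(R^{-1/2})$. The first step is a local estimate. Let $\rho_2(x,y)$ be the two-point correlation function of $Z$; by stationarity it depends only on $y-x=s$. Writing it via the Kac--Rice formula and Taylor expanding $f(y)=f(x)+sf'(x)+\tfrac{s^2}{2}f''(x)+\dots$ and $f'(y)=f'(x)+sf''(x)+\dots$, the conditioning event $f(x)=f(y)=0$ becomes, after a change of variables, $f(x)=0$, $f'(x)=-\tfrac s2 f''(x)+O(s^2)$. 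The Jacobian contributes $s^{-1}$, and $|f'(x)f'(y)|\approx \tfrac{s^2}{4}|f''|^2$. The result is
\[
\rho_2(0,s)= \frac{\E[|f''(0)|^2\mid f(0)=f'(0)=0]}{4\cdot 2\pi\sqrt{\det\operatorname{Cov}(f(0),f'(0))}}\,s\,(1+o(1))=\alpha_0 s(1+o(1)).
\]
Integrating over $x\in RB$ and $s\in R^{-1/2}A$ gives $R\,m(B)\int_{R^{-1/2}A}\alpha_0 s\,ds=\alpha_0 m(B)\int_A u\,du$, which is the required first moment. We must also show that requiring no zero strictly between the two points, so that the pair is successive, costs only $o(1)$. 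This follows from a bound $\rho_3(x,y,z)\lesssim$ (product of gaps)$\times$(local factor) on tiny windows, or alternatively from the bound $\E\binom{N_I}{3}=O(|I|^{3})$ for intervals $I$ of length $\asymp R^{-1/2}$, which makes triples contribute $R\cdot R^{-3/2}\to0$.

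The second step handles the $k$-th factorial moments. These are integrals of $2k$-point correlation functions $\rho_{2k}$ over configurations of $k$ pairs, with each pair of width $\asymp R^{-1/2}$ and the pair locations $x^{(1)},\dots,x^{(k)}\in RB_j$. We split the location space according to whether all pairwise distances between the $x^{(l)}$ exceed a large constant $\eta$, or some are $\le\eta$. In the far region, the decay assumption \eqref{rapiddecay} makes the covariance of the Gaussian vector $(f,f',f'',\dots)$ at distinct clusters nearly block diagonal, with off-diagonal blocks bounded by $\|\kappa\|_{n,\eta}$. A perturbation argument for the Kac--Rice density, using nondegeneracy of each block, gives $\rho_{2k}=\prod_l\rho_2(\text{pair }l)\,(1+O(\|\kappa\|_{n,\eta}))$ uniformly, so the far region yields $\prod\mu_j^{k_j}$ up to an error that vanishes as $\eta\to\infty$. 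In the near region, the clusters lie within distance $\eta$ and the volume in the location variables is only $O(R\,\eta^{k-1})$ instead of $R^k$. Each pair still carries a factor of order $R^{-1}$ after integrating the gap, provided we have a uniform bound $\rho_{2k}\lesssim\prod_l |\text{gap}_l|$ on bounded windows. Given that bound, the near region contributes $O(R^{1-k}\eta^{k-1})\to0$.

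The main obstacle is this uniform local bound: bounding the $2k$-point Kac--Rice density when several zeros cluster, including when two pairs are at distance $O(1)$ or even overlap at scale $R^{-1/2}$, where the Gaussian vector $(f(x_i),\dots)$ degenerates. We plan to handle it with divided differences. Replace $(f(x_1),\dots,f(x_n))$ by the vector of divided differences, which converges to a nondegenerate vector of derivatives; nondegeneracy holds for any finite jet because $f$ is nondegenerate and smooth. This yields a Vandermonde-type factor, and together with the $|f'|$ factors it gives $\rho_n\lesssim\prod_{i<j}|x_i-x_j|$ in tight clusters. This is more than enough. The uniformity in the cluster configuration requires a compactness argument over the rescaled configurations, which we expect to be the most technical part.
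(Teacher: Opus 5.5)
Your outline is sound, but it takes a different route from the paper. The paper never computes factorial moments of $\mathcal S_R$ itself. It passes to the thinned process $\tilde{\mathcal S}_{R,A}$, which drops an $A$-pair whenever another gap of length at most $d_R$ starts within $R^{1/4}$ of it. Lemma \ref{nssn} shows this changes the count only by $o(1)$ in probability, via $\E|D_0|=O(R^{-3/2})$ and $\E|D_1|=O(R^{-3/4})$. The thinning forces $\tilde\tau_k\equiv 0$ unless the locations are $R^{1/4}$-separated. On that set the paper sandwiches $\tilde\tau_k$ between $L(R)$ and $L(R)-E_2(R)-E_3(R)$ and applies \eqref{mainsplit} at scale $R^{1/4}$.

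You instead work with $\mathcal S_R$ directly and split location space into a far region (handled by splitting) and a near region (handled by a short-range bound). This dispenses with the auxiliary process, Lemma \ref{nssn} and the error term $E_3$. Your joint moments over disjoint rectangles are also what is actually needed to identify a Poisson process limit; the paper writes out only a single product set $A\times B$. The price is that you must control the near region and the successiveness constraint at every order yourself. You also plan to re-derive \eqref{unbd} and \eqref{mainsplit} from divided differences, whereas the paper imports both from \cite{AL}.

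Three points need fixing.

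\emph{The near region.} The set $\{\exists\, l\neq m:\ |x^{(l)}-x^{(m)}|\le\eta\}$ has volume of order $R^{k-1}\eta$, not $R\eta^{k-1}$. Its configurations are also not confined to a bounded window, so a bound valid only on bounded windows does not suffice. Use the global bound \eqref{unbd}, or your local bound combined with splitting across $\eta$-separated groups. This gives $\rho_{2k}\le C\prod_l|y^{(l)}-x^{(l)}|$ everywhere, so the near region contributes $O(\eta/R)$, and you may even take $\eta=R^{1/4}$.

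\emph{The factorial moments for $k\ge 2$.} These are not just $\int\rho_{2k}$. The upper bound must also count tuples in which one pair ends where the next begins. Such tuples have three zeros within $2d_R$, are controlled by $\rho_{2k-1}$, and contribute $O(R^{-3/2})$. The lower bound must subtract configurations with an extra zero inside a pair, via $\rho_{2k+1}$ as in the paper's $E_2(R)$. Restrict this to the far region, so that no point of another pair can lie inside a pair.

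\emph{The perturbation proof of the splitting.} It cannot be run in the raw coordinates $(f(x),f(y),f'(x),f'(y))$. Each pair block degenerates at scale $R^{-1/2}$ while the numerator is $O(s^2)$, so you need a relative error, not an absolute one. The argument works after passing to divided differences within each cluster. On $\{f(x)=f[x,y]=0\}$ one has $f'(x)=-(y-x)f[x,x,y]$ and $f'(y)=(y-x)f[x,y,y]$, and this is how \cite{AL} obtain \eqref{mainsplit}.
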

	
	As a direct consequence of Theorem \ref{main1}, we obtain the following result. Let $\sigma_k$ be the $k$-th smallest gap, i.e., $\sigma_k$ is the $k$-th smallest number in the set of   gaps between successive zeros,
	$$\Big\{x_{i+1}-x_i, 1\leq i\leq \abs{Z_R}-1\Big\}.$$ 
	Let  $\pi_k$ be the position $x_i$ where $x_{i+1}-x_i$ is the $k$-th smallest gap.  
	\begin{cor}   
		For any $k\geq 1$ and $x\geq 0$, one has 
		$$\lim_{R\to\infty} \mathbb P\Big(\sqrt{\frac{\alpha_0}2}R^{1/2}\sigma_k\leq x
		\Big)=\int_0^x \frac{ y^{2k-1}}{(k-1)!}e^{-   y^2}dy.$$
		Furthermore, as $R\to\infty$,  $\pi_k/R$ is asymptotically independent of $\sigma_k$ and tends to the uniform distribution on $[0,1]$.
		
	\end{cor}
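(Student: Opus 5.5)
The corollary follows from Theorem \ref{main1} by reading off void probabilities of the limiting Poisson process; the only additional input is a standard passage from convergence in distribution of point processes to convergence of order statistics. First I express the events in terms of $\mathcal S_R$. For $t>0$, the event $\{R^{1/2}\sigma_k\le t\}$ says that at least $k$ gaps are at most $t R^{-1/2}$, that is, $\mathcal S_R((0,t]\times[0,1])\ge k$. Similarly, for $B\subseteq[0,1]$ an interval,
\[
\{R^{1/2}\sigma_k\le t,\ \pi_k/R\in B\}
\]
can be written through counts of the form $\mathcal S_R((0,s]\times B)$ and $\mathcal S_R((0,s]\times B^c)$. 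Concretely, it holds exactly when the $k$-th smallest rescaled gap lies in $(0,t]$ and its location coordinate lies in $B$.

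Next I pass to the limit. The limit $\mathcal S$ has intensity $\alpha_0 u\,du\,dm$, which is non-atomic, so its boundaries carry no mass. Hence Theorem \ref{main1} gives joint convergence in distribution of finitely many counts $\mathcal S_R(A_j\times B_j)$ for bounded rectangles. A small technical point is that $(0,t]$ abuts $0$; this is harmless because $A=(0,t]$ is bounded and the theorem is stated for bounded Borel $A\subset(0,\infty)$. I also need that $\mathcal S$ has no accumulation of points near $u=0$, which holds because $\int_0^t \alpha_0 u\,du<\infty$. Therefore
\[
\mathbb P(R^{1/2}\sigma_k\le t)\to \mathbb P(\mathrm{Poi}(\lambda_t)\ge k), \qquad \lambda_t=\frac{\alpha_0 t^2}{2}.
\]

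To finish the distributional computation, set $x=\sqrt{\alpha_0/2}\,t$, so that $\lambda_t=x^2$. Then
\[
\mathbb P(\mathrm{Poi}(x^2)\ge k)=1-\sum_{j<k}e^{-x^2}\frac{x^{2j}}{j!}.
\]
Differentiating in $x$ telescopes to $2x^{2k-1}e^{-x^2}/(k-1)!$, which gives the stated density. The factor $2$ should be checked against the displayed formula. For $k=1$ the density must integrate to $1$, and $\int_0^\infty y\,e^{-y^2}\,dy=1/2$, so the stated density appears to be missing a factor of $2$; I would verify this normalization carefully.

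For the position, I use that in the limit the marks are independent of the gap sizes and uniform. A Poisson process with product intensity $\alpha_0 u\,du\otimes m$ can be realized as a Poisson process in $u$ with i.i.d.\ uniform marks. The $k$-th smallest point's mark is then uniform on $[0,1]$ and independent of its $u$-coordinate. The finite-$R$ event for $(\sigma_k,\pi_k)$ is a finite Boolean combination of counts on rectangles, so the joint law converges. The main obstacle is justifying this Boolean-combination step cleanly. I would handle it by partitioning $[0,1]$ into $B$ and $B^c$ and $(0,t]$ into fine pieces, using the fact that in the limit there are almost surely no ties among the points.
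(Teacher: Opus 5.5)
Your proposal is correct and takes the same route as the paper, which presents the corollary as a direct consequence of Theorem \ref{main1}. The count $\mathcal S_R((0,t]\times[0,1])$ converges to a $\mathrm{Poi}(\alpha_0 t^2/2)$ variable, and the product form $\alpha_0 u\,du\otimes m$ of the limiting intensity makes the location uniform and asymptotically independent of the gap. Your normalization check is also right: the limit is $\int_0^x \frac{2y^{2k-1}}{(k-1)!}e^{-y^2}\,dy$, i.e. the square of the limiting variable is $\Gamma(k,1)$-distributed. The displayed density is therefore missing a factor $2$; as written it has total mass $1/2$ for every $k$.
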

	
	%The proof of Theorem \ref{main1} is based on the method of modified point process developed by  Soshnikov  in \cite{So}, where Soshnikov studied the smallest gaps for determinantal point processes on the real line with translation-invariant kernels. %In the following, we outline the main steps to prove Theorem \ref{main1}.
	
	We outline the proof of Theorem \ref{main1} as follows. Fix any bounded Borel sets $A\subset (0,\infty)$ and $B\subseteq [0,1]$. %By the moment method, in order to establish the convergence towards the Poisson distribution, it is sufficient to prove the convergence of the factorial moment, \beq\label{factorial1}\lim_{R\to\infty}\mathbb E\Big(\frac{|\mathcal S_R(A\times B)|!}{(|\mathcal S_R(A\times B)|-k)!} \Big) =\Big(\alpha_0 m(B)\int_A udu\Big)^k\eeq for any $k\geq 1$.  
	%We will not directly prove this convergence for $\mathcal{S}_R$ itself; instead, we consider another auxiliary random point process. 
	Let $c_1$ be such that $A \subset(0, c_1)$, and set
	\begin{equation}\label{dr} d_{R}:=c_1 R^{-1 / 2}.\end{equation}
	Given  $Z_R$, for any $1\leq i \leq \abs{Z_R}$,  let $g_i$ be the indicator function of the event
	\begin{equation}\label{indic}
	\bigcap_{j:1\leq j\leq \abs{Z_R},j\neq i}\Big\{
\mbox{Either }x_{j+1}-x_j>d_R \mbox{ or } \abs{x_j-x_i}>R^{1/4}\Big\}.\end{equation}
Thus, $g_i=0$ if and only if there exists some $j\neq i$ such that both $x_{j+1}-x_j\leq d_R$ and $\abs{x_j-x_i}\leq R^{1/4}$ hold. 
	 We then define an auxiliary point process as
	\begin{equation}\label{tsra}\tilde {\mathcal S}_{R,A}:=\frac{1}{R} \Big\{  x_i\in Z_R: x_{i+1}-x_i \in R^{-1/2}A \mbox{ and }g_i=1\Big\}.\end{equation} 
	%In other words, $\tilde{\mathcal{S}}_{R,A}$ is a point process defined on $[0,1]$, consisting of rescaled zeros $x_i$ where there is exactly one zero $x_{i+1}$ falling within $x_i+ R^{-1/2}A$, and no other zero $x_j$ ($j\neq i$) satisfying $x_{j+1}-x_j\leq d_R$ falls within the range $[x_i-R^{1/4},x_i+R^{1/4}]$. 
	%Note that here we only count $x_i$ in $\tilde{S}_{R,A}$ but leaving out $x_{i+1}$. 
	%In fact,	for $R$ large enough so that $d_R<R^{1/4}$, if $x_i/R\in \tilde {\mathcal S}_{R,A}$, then necessarily $x_{i+1}/R\notin \tilde {\mathcal S}_{R,A}$. In other words,  $\tilde {\mathcal S}_{R,A}$ does not include consecutive points in $Z_R$.  
%	For a concrete example, consider the case $$R=16,\,
%	A=[1,6.8],\, c_1=7;$$
%	 and $$
%	Z_R=\{x_1,\ldots, x_6\}=\{1.6,\, 3.1,\, 5.2,\, 6.5,\, 8.1,\, 12\}.
%	$$
%	Then we  can compute that
%	\begin{equation}\label{g123}
%			g_1=g_5=g_6=1, \, g_2=g_3=g_4=0;
%	\end{equation}
%and 
%	\begin{equation}\label{x123}
%			x_2-x_1, x_4-x_3 \in R^{-1/2}A=[0.25, 1.7].
%	\end{equation}
%	Combining \eqref{g123} and \eqref{x123}, we get
%	$$
%	\tilde {\mathcal S}_{R,A}= \frac{1}{R}\{x_1\}=\frac{1}{16}\{1.6\}=\{0.1\}.
%	$$
	The definition of $\tilde {\mathcal S}_{R,A}$ implies that for any  $B\subseteq [0,1]$, we have
	$$  \Big{|} {\mathcal S}_R(A\times B)\Big{|}\geq  \abs{\tilde {\mathcal S}_{R,A}(B)}.$$ 
	%As we shall see, the modified point process is more convenient for analysis compared to the original one.  
	In Lemma \ref{nssn},  we will prove that the cardinalities of the point processes   $\tilde {\mathcal S}_{R,A}$ and  ${\mathcal S}_R(A  \times \cdot)$ are actually  approximately  the same in the sense that, 
	$$  \Big{|} {\mathcal S}_R(A\times [0,1])\Big{|}- \abs{\tilde {\mathcal S}_{R,A}([0,1])}\to 0$$
	in probability as $R\to\infty$. 
	Therefore, to prove Theorem \ref{main1}, by the moment method,  it suffices to prove that the factorial moments satisfy  %in Lemma \ref{bigdistance} that 
	\beq\label{factorial2}\lim_{R\to\infty}\mathbb E\Big(\frac{|\tilde{\mathcal S}_{R,A}( B)|!}{(|\tilde{\mathcal S}_{R,A}( B)|-k)!} \Big) =\Big(\alpha_0 m(B)\int_A udu\Big)^k, \eeq
	for any integer $k\geq1$. 	Let $\tilde \tau_k(x_1, ..., x_k)$ be the $k$-point correlation function of   $\tilde{\mathcal S}_{R,A}$, then by the definition of the correlation function,  \eqref{factorial2} is further equivalent to 
	\beq\label{tilderhok}\lim_{R\to\infty}\int_{B^k}\tilde \tau_k(x_1, ..., x_k)dx_1\cdots dx_k=\Big(\alpha_0 m(B)\int_A udu\Big)^k.\eeq
	To establish this, we first provide the upper bound \eqref{uppbd} and  the lower bound \eqref{lbtilde} for  $\tilde \tau_k(x_1, ..., x_k)$ in terms of the correlation functions of zeros of the stationary Gaussian process. Subsequently, in Lemma \ref{bigdistance}, we derive the   limit \eqref{spera} for  $\tilde \tau_k(x_1, ..., x_k)$ by proving that the upper and lower bounds converge asymptotically to the same value. This will lead to the limit \eqref{tilderhok}, thereby concluding the proof of Theorem \ref{main1}.
	
	In a companion paper \cite{FGY}, similar techniques are applied to study the smallest distances between complex zeros of Gaussian holomorphic sections of the positive holomorphic line bundle $L^n$ over compact Riemann surfaces (refer to \cite{BSZ1} for more details). The main results are that the smallest distances between zeros are of the order $n^{-3/4}$, and tend to a Poisson point process in the limit after rescaling. Moreover, the positions where the smallest distances occur tend to a uniform measure with respect to the volume form. These findings are universal and apply specifically to classical $\operatorname{SU(2)}$ random polynomials on the complex projective space $\mathbb{CP}^1\cong S^2$.
	
	It is worth mentioning that in \cite{So}, Soshnikov studied the smallest gaps of determinantal point processes on the real line with translation-invariant kernels. He proved that for points within the interval $[0, R]$, the smallest gaps decay with an order of $R^{-1/3}$ and also converge to a Poisson point process as $R\to\infty$. %The determinantal point processes exhibit the negative association property, i.e., particles are repulsive to each other. 
	
Recently, the largest gaps between consecutive zeros of stationary Gaussian  processes with rapidly decaying kernels were studied in \cite{FM}. It was proved that, after suitable rescaling, the largest gaps also converge to a Poisson process, which in turn determines their decay rate. Unlike the case of the smallest gaps, where one analyzes the splitting of the correlation function (e.g., \eqref{mainsplit}), establishing the Poisson limit for the largest gaps requires proving a splitting of the gap probabilities of the zeros, which is a technically demanding task.

	\bigskip
	
	\emph{Notation}. The symbol $C$ represents a positive constant; however, its specific value may vary from line to line. % We use the notation $a \simeq b$ to indicate that $a/b$ tends to 1 with respect to the corresponding limit. %Given a Borel set $A\subset \mathbb R$,  we will denote  $x+A:=\{x+a, a\in A\}$.

	\section{Correlation functions of zeros}\label{sec:2}

	In this section, we will provide a summary of the main results from \cite{AL} regarding the short-range and long-range behaviors of the correlation functions of zeros of a stationary Gaussian random process satisfying assumption \eqref{rapiddecay}.
	
	% vanishing order of the correlation functions for its zeros on the diagonal and the long range behavior when zeros are widely separated.

	%Let $k \in \mathbb{N}^*$, if $f$ is of class $\mathcal{C}^k$ then, as we stated above, the fact $\kappa(x) \underset{x \rightarrow+\infty}{\longrightarrow} 0$ is enough to ensure that $f$ satisfies the hypotheses of Theorem 1.13 at any point $y \in \mathbb{R}^k$. This condition is sufficient but not necessary, see Lemma A. 2 below.

	%We first review some
	%basic concepts about the factorial moments and the correlation functions of
	%a point process.  
	
	Given a simple point process on the real line $\mathbb{R}$ represented by   $$X=\sum_{i}\delta_{X_i},$$ one can define another point process on $\mathbb R^k$ as
	\begin{equation}\label{dis}
		X^{(k)}=\sum_{X_{i_1},\cdots, X_{i_k}\,\,\textrm{all distinct}}\delta_{(X_{i_1},\cdots, X_{i_k})}.\end{equation} 
	Then the $k$-point correlation function $\rho_k$ of $X$ is defined as follows, 
	\beq\label{corre}\mathbb E\abs{X^{(k)}(B_1\times \cdots \times B_k)} =\int_{B_1\times \cdots \times B_k}\rho_k(x_1,\cdots, x_k)dx_1\cdots dx_k, \eeq
	where   $B_1, \cdots, B_k$ are Borel sets in $\mathbb R$.  In particular,  taking $B_1=\cdots=B_k=B$, \eqref{corre} yields the $k$-th factorial moment of the number of points falling in $B$, 
	$$\mathbb E \left(\frac{\abs{X(B)}!}{(\abs{X(B)}-k)!}\right)=\int_{B^k}\rho_k(x_1,\cdots, x_k)dx_1\cdots dx_k.$$
	%where $B$ is a Borel set in $\mathbb R$. 
	
	Now, given a nondegenerate smooth stationary centered Gaussian process $f(x)$,  the $k$-point correlation function of zeros of this process is 
	given by the classical Kac-Rice formula  \cite{AT}.  We first define the diagonal set in $\mathbb{R}^k$ as follows,    $$\Delta_k=\Big\{(x_1,..., x_k): x_i=x_j \,\,
	\mbox{for some }\,\, i\neq j\Big\}.$$
	Then for $(x_1,.., x_k)\notin \Delta_k$, the Kac-Rice formula reads
	% Similarly, for $x_1\neq\cdots\neq x_k$, one has the following expression of the  $k$-point correlation function, 
	$$\rho_k(x_1,\cdots, x_k)= \frac1{(2\pi)^\frac k2} \frac{\mathbb E(\prod_{i=1}^k|f'(x_i)||f(x_1)=\cdots =f(x_k)=0)}{ \sqrt{ \det[\operatorname{Cov}(f(x_i), f(x_j))_{1\leq i, j\leq k}]}}.$$
	The denominator in the formula is nondegenerate for   $(x_1,..., x_k)\notin \Delta_k$  due to the assumption \eqref{rapiddecay}  of the  decay of the correlation kernel (e.g., Lemma 2.10 in \cite{AL}).

	%The above Kac-Rice formula is  for  the correlation functions defined on $\mathcal{\mathbb R}^k \backslash \Delta_k$. 
	
	It is important to note that difficulties arise on the diagonal $\Delta_k$, where both the numerator and the denominator tend to zero.  In \cite{AL}, based on the method of divided differences, the authors  determined the precise vanishing order of   $\rho_k$ near $\Delta_k$ assuming \eqref{rapiddecay}. To provide more details, consider $y=\left(y_i\right)_{1 \leq i\leq  k} \in \Delta_k$ and let $\mathcal{I}$ be the partition of  $\{1, 2,..., k\}$ defined by:
	$$
	\forall i, j \in\{1, \ldots, k\}, \quad 
	\exists I \in \mathcal{I},\,\,\,\{i, j\} \subset I
	\Longleftrightarrow 
	y_i=y_j .
	$$
	We denote by $y_I \in \mathbb{R}$ the common value of the $\left(y_i\right)_{i \in I}$ and define 
	\beq\label{lfunction}\begin{split}
		\ell(y):=&\left(\prod_{I \in \mathcal{I}} \prod_{i=0}^{|I|-1} \frac{i !}{|I| !}\right)\times \\ & \frac{\mathbb{E}\left[\prod_{I \in \mathcal{I}}\left|f^{(|I|)}\left(y_I\right)\right|^{|I|} \mid \forall I \in \mathcal{I}, \forall i \in\{0, \ldots,|I|-1\}, f^{(i)}\left(y_I\right)=0\right]}{(2 \pi)^{\frac{{k}}{2}} \operatorname{det}\left(\operatorname{Cov}\left(\left(f^{(i)}\left(y_I\right)\right)_{I \in \mathcal{I}, 0 \leqslant i<|I|}\right)\right)^{\frac{1}{2}}}. \end{split}
	\eeq 
	%note that the factor |I|! comes from the Taylor expan
	%where $\mathbb{E}\left[\prod_{I \in \mathcal{I}}\left|f^{(|I|)}\left(y_I\right)\right|^{|I|} \mid \forall I \in \mathcal{I}, \forall i \in\{0, \ldots,|I|-1\}, f^{(i)}\left(y_I\right)=0\right]$ stands for the conditional expectation of $\Pi_{I \in \mathcal{I}}\left|f^{(|I|)}\left(y_I\right)\right|^{|I|}$ given that $f^{(i)}\left(y_I\right)=0$ for all $I \in \mathcal{I}$ and $i \in\{0, \ldots,|I|-1\}$.
	Then, Theorem 1.13 in \cite{AL} establishes the convergence \beq\label{maincon}
	\lim_{x\to y}\left(\prod_{I \in \mathcal{I}} \prod_{\left\{(i, j) \in I^2 \mid i<j\right\}} \frac{1}{\left|x_i-x_j\right|}\right) \rho_k(x_1,.., x_k) =\ell(y).
	%\underset{x \rightarrow y}{\longrightarrow} \ell(y).
	\eeq
	In particular, consider the case $k=2$,  for any fixed $y\in \mathbb{R}$, we have 
	\beq\label{alp0}
	\lim_{x\to y} \frac{\rho_2(x,y)}{\abs{x-y}}=\ell(y)=
	\ell(0)= \frac{\mathbb{E}\left[|f''(0)|^2 \mid f(0)=f'(0)=0\right]}{8\pi \sqrt{\operatorname{det}  \operatorname{Cov}\left(f(0),  f'(0)\right)}}=\alpha_0,
	\eeq
	where we used the stationarity of the Gaussian process in the second equality, and the definition of $\alpha_0$ in \eqref{constant} in the last equality. 
	
	In Theorem 1.14 of \cite{AL}, the authors described  both the short-range and the long-range behaviors of the correlation functions.  For the short-range behavior,   there exists $C>0$ such that for all $x=\left(x_i\right)_{1 \leqslant i \leqslant k} \in \mathbb{R}^k \backslash \Delta_k$ one has 
	\beq\label{unbd}
	0 \leqslant \rho_k(x) \leqslant C \prod_{1 \leqslant i<j \leqslant k} \min \left(\left|x_i-x_j\right|, 1\right) .
	\eeq 
	This implies that the correlation functions vanish along  the diagonal, i.e., nearby zeros are repulsive to each other. For the long-range behavior, for all $\eta \geqslant 1$, for all partitions $\mathcal{I}$ of $\{1, 2,.., k\}$, for all $\left(x_i\right)_{1 \leqslant i \leqslant k} \in \mathbb{R}^k \backslash \Delta_k$ satisfying
	$$
	\forall I, J \in \mathcal{I} \text { such that } I \neq J,  \, \mbox{we have}\,\,  \forall i \in I,\,\, \forall j \in J_{}, \,\left|x_i-x_j\right|>\eta, \, \,
	$$
	one has the splitting property, 
	\beq\label{mainsplit}
	\rho_k(x_1,.., x_k)
	=	\prod_{I \in \mathcal{I}} \rho_{|I|}\left(\underline{x}_I\right)\left(1+O\left(\left(\|\kappa\|_{k, \eta}\right)^{\frac{1}{2}}\right)\right),
	\eeq 
	where the implicit constant involved in the error term $O\left(\left(\|\kappa\|_{k, \eta}\right)^{\frac{1}{2}}\right)$ depends only on $k$. Here, we denoted by $\underline{x}_I=\left(x_i\right)_{i \in I}$, for all $I \in \mathcal{I}$.	
	
	%For $x=\left(x_i\right)_{1 \leqslant i \leqslant k} \in \mathbb{R}^k \backslash \Delta_k$, although $x$ is not in the diagonal $\Delta_k$, but there are clusters of $x_i$'s that are closed to each other, this theorem implies that 
	%wherever the distance between two clusters is greater than $\eta$, then the two clusters are  asymptotic independent.  In particular, if $x_i-x_j\geq \eta$ for all $1\leq i<j\leq k$, then $\rho_1(x_1)\cdots \rho_1(x_k)=\rho_k(x_1,.., x_k)(1+O(\left(\|\kappa\|_{k, \eta}\right)^{\frac{1}{2}}))$. 
	
	%The appearance of $\|\kappa\|_{k, \eta}$ is quite natural when one tries to control the decay of the off diagonal elements of the covariance matrix of the Gaussian processes. For example, 
	%for $k=3$ and the case $(f(x), f(\simeq x), f(y))$, then the denominator is the covariance of matrix of the Gaussian vector $(f(x), f'(x), f(y))$, and numerator is the cexpectation of $|f''(x)|^2|f'(y)|$ where one has to estimate the covariance of $(f''(x), f'(y))$, which is to estimate the off diagonal element $\kappa^{(3)}(x-y)$. Therefore, if we expect the off diagonal element is decay to 0  fast enough, this is equivalent to assume  $\|\kappa\|_{3, \eta}\to 0$  as $\eta\to\infty$. 

	\section{Proof of Theorem \ref{main1}}
	
	% \subsection{No successive smallest gaps}
	%Actually, the above arguments can be applied to real Gaussian random polynomials. For example, let's take
	%$$p_n(x)=\sum_{j=0}^n {n\choose j}x^j,$$%
	%where $a_j$ are standard independent and identical distributed standard real Gaussian random variables. 

	%Given a bounded Borel subset $A\subset[0,\infty)$, 
	%for  the point process of zeros of the stationary Gaussian process falling in $[0, R]$, 
	%$$
	%\xi^{R}=\sum_{i=1}^{N(R)} \delta_{x_{i}},
	%$$ we define  its thinning $\tilde{\xi}^{R}$ obtained from $\xi^{R}$ by only keeping the points $x_{k}$ for which $\xi^{R}\left(x_{k}+A_{R}\right)=1$, where $A_R=R^{-1/2}A$. The following lemma means that $\mathcal S^{R}(A):=\langle \mathcal S^R, \chi_A\rangle$, the number of successive gaps such that $R^{1/2}(x_{i+1}-x_i)\in A$,  is properly estimated by the cardinality $| \tilde{\xi}^{R}|$. 
	In this section, we will prove the main result of Theorem \ref{main1}.
	Recall the definitions of $\mathcal{S}_{R}$ and  $\tilde{\mathcal{S}}_{R,A}$ in \S\ref{intro}. Let $\rho_k$ and $\tilde{\tau}_k$ denote the $k$-point correlation functions of   zero set $Z$ of the stationary Gaussian process and $\tilde{\mathcal{S}}_{R,A}$, respectively. We first have %The  following lemma demonstrates the asymptotic equivalence of the two point processes $\mathcal{S}_R(A\times \cdot)$ and $\tilde{\mathcal{S}}_{R,A}$.
	
	\begin{lem}\label{nssn}As $R\to\infty$, one has  the convergence,   
		$$  \Big{|} {\mathcal S}_R(A\times [0,1])\Big{|}- \abs{\tilde {\mathcal S}_{R,A}([0,1])}\to 0\,\,\, \mbox{ in probability}.$$
	\end{lem}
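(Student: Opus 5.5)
Since $|\mathcal S_R(A\times[0,1])|\ge |\tilde{\mathcal S}_{R,A}([0,1])|$ and both counts are integers, the difference is a nonnegative integer. It therefore suffices to show that the expectation of the difference tends to $0$, and then apply Markov's inequality. A point $x_i$ counted in $\mathcal S_R(A\times[0,1])$ but not in $\tilde{\mathcal S}_{R,A}$ has $x_{i+1}-x_i\le d_R$ and $g_i=0$. So there is some $j\ne i$ with $x_{j+1}-x_j\le d_R$ and $|x_j-x_i|\le R^{1/4}$. Hence the difference is at most the number $N_R$ of ordered pairs $(i,j)$, $i\neq j$, of such ``close-pair'' configurations. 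We bound $\mathbb E N_R$ by splitting into two cases.

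\emph{Case 1: $j=i+1$ or $i=j+1$.} Then $x_i,x_{i+1},x_{i+2}$ (or the analogous triple) are three distinct zeros lying in a window of length $2d_R$. Counting ordered triples of distinct zeros in $[0,R+1]$ with mutual distances at most $2d_R$, this contribution is bounded by
$$\int_{0}^{R+1}\int_{|x_2-x_1|\le 2d_R}\int_{|x_3-x_1|\le 2d_R}\rho_3(x_1,x_2,x_3)\,dx_3\,dx_2\,dx_1 .$$
By \eqref{unbd}, $\rho_3\le C\prod_{i<j}|x_i-x_j|\le C d_R^3$ on this region. The integral is therefore $O(R\,d_R^2\,d_R^3)=O(R\cdot R^{-5/2})\to0$.

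\emph{Case 2: the index sets $\{i,i+1\}$ and $\{j,j+1\}$ are disjoint.} We then have four distinct zeros $x_i,x_i',x_j,x_j'$ with $|x_i'-x_i|\le d_R$, $|x_j'-x_j|\le d_R$ and $|x_j-x_i|\le R^{1/4}$. The contribution is bounded by $\int\rho_4$ over this region. Split it according to whether $|x_i-x_j|\ge \eta$ or $<\eta$, for a fixed large $\eta$ (or $\eta=\eta_R\to\infty$ slowly).
\begin{itemize}
\item If $|x_i-x_j|\ge\eta$, use \eqref{mainsplit} with the partition $\{\{1,2\},\{3,4\}\}$. Up to a factor $1+o(1)$, this gives $\rho_4\le \rho_2(x_i,x_i')\rho_2(x_j,x_j')$. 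By \eqref{unbd}, each factor is $\le C d_R$. Integrating yields $O\big(R\cdot R^{1/4}\cdot d_R^2\cdot d_R^2\big)=O(R^{5/4}R^{-2})\to0$.
\item If $|x_i-x_j|<\eta$, use \eqref{unbd} directly: $\rho_4\le C|x_i-x_i'|\,|x_j-x_j'|\le Cd_R^2$. This gives $O(R\cdot\eta\cdot d_R^2\cdot d_R^2)=O(\eta R^{-1})\to0$.
\end{itemize}
Indeed, in the second subcase even the bound $\rho_4\le C$ times the two small factors suffices. Thus splitting is not really needed at all. The cruder bound $\rho_4\le C d_R^2$ over the whole region gives $O(R\cdot R^{1/4}\cdot d_R^4)=O(R^{-3/4})\to0$.

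The main obstacle is the bookkeeping: correctly expressing the sums over consecutive zeros as sums over distinct tuples of zeros, and checking that boundary zeros just outside $[0,R]$ (from $x_{i+1}$ or $x_{j+1}$ at the edge) only enlarge the integration domain harmlessly. Once the counts are written as factorial-moment integrals of $\rho_3$ and $\rho_4$, the bound \eqref{unbd} alone gives $\mathbb E N_R=O(R^{-3/4})$. Markov's inequality then gives the convergence in probability.
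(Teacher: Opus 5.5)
Your proposal is correct and follows essentially the same argument as the paper: you bound the nonnegative integer difference by counts of triples of zeros within $O(d_R)$ of each other (the paper's $D_0$, contributing $O(R^{-3/2})$) and of $4$-tuples forming two close pairs within distance $R^{1/4}$ (the paper's $D_1$, contributing $O(R^{-3/4})$), with both expectations estimated by \eqref{unbd} alone. As you note yourself, the splitting detour via \eqref{mainsplit} is unnecessary, and your handling of consecutive zeros just beyond $R$ by enlarging the integration domain to $[0,R+1]$ is a small bookkeeping point that the paper's $Z_R^{(3)}, Z_R^{(4)}$ formulation glosses over.
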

	
	\begin{proof} Recall that $c_1$ is chosen such that $A \subset(0, c_1)$, and we denote $d_{R}=c_1 R^{-1 / 2}$. By the definitions of $\mathcal{S}_R$ and $\tilde{S}_{R,A}$, we have  
		\begin{equation}\label{srdiff}
			0\leq |\mathcal S_{R}(A\times [0,1])|-|\tilde {\mathcal S}_{R,A}([0,1]) |   \leq |D_0|+|D_1|,
		\end{equation}
		where the sets $$
		D_0:=\Big\{  (x_1,x_2,x_3)\in Z_R^{(3)}: \abs{x_2-x_1}<d_R,  \abs{x_3-x_1}<d_R\Big \}
		$$
		and
		$$
		D_1:=\Big\{  (x_1,x_2,x_3,x_4)\in Z_R^{(4)}: \abs{x_2-x_1}<d_R,  \abs{x_4-x_3}<d_R, \abs{x_3-x_1}<R^{1/4} \Big \}.
		$$
		Here, $Z_R^{(3)}$ refers to the set of 3-tuples composed of pairwise distinct elements from the zeros set $Z_R$, as defined in  \eqref{dis}. The same applies to $Z_R^{(4)}$.  
		To prove that the integer-valued random variables $|D_0|$ and $|D_1|$ converge to 0 in law, we consider their expectations.  By the uniform upper bound for  the correlation functions in \eqref{unbd} together with the stationarity of the Gaussian process,  we have 
		\beq \label{d0}\begin{split}
			\mathbb E|D_0|=&\int_{0}^{R} \mathrm{d}x_1  \int_{\left(x_1-d_R, x_1+d_{R}\right)\times (x_1-d_R,x_1+d_R ) } \rho_{3} \left(x_1, x_{2}, x_{3}\right) \mathrm{d} x_{2} \mathrm{~d} x_{3}\\ \leq &C \int_{0}^{R}   dx_1\int_{(-d_R, d_R)\times (-d_R, d_R) }  |x_2||x_3||x_2-x_3|dx_2dx_3. 
		\end{split}\eeq
		%By stationarity the function $\ell(x_0)$ can be expressed by the covariance function and its derivatives evaluated at 0, and thus it's uniformly bounded. 
		This integral can be bounded from above by $O(Rd_R^5 )=O(R^{-3/2})$, which tends to 0 as $R\to\infty$. Similarly, we   have 
		\beq \label{d1}\begin{split}
			&\mathbb E|D_1|\\=&\int_{0}^{R} \mathrm{d}x_1 
			\int_{x_1-R^{1/4}}^{x_1+R^{1/4}} \mathrm{d}x_3 
			\int_{\left(x_1-d_R, x_1+d_{R}\right)\times (x_3-d_R,x_3+d_R ) } \rho_{4} \left(x_1, x_{2}, x_{3},x_4\right) \mathrm{d} x_{2} \mathrm{~d} x_{4}\\ \leq &C \int_{0}^{R}   dx_1   	\int_{x_1-R^{1/4}}^{x_1+R^{1/4}} \mathrm{d}x_3 
			\int_{(-d_R, d_R)\times (-d_R, d_R) }  |x_2||x_4|dx_2dx_4\\
			\leq & C R^{5/4} d_R^4=O(R^{-3/4}),
		\end{split}\eeq which also tends to 0 as $R\to\infty$. 
		This  completes the proof of Lemma \ref{nssn} by \eqref{srdiff}.  \end{proof}

	As explained in \S\ref{intro}, the remaining effort is to prove  the following convergence of the factorial moment of $\tilde{\mathcal{S}}_{R,A}(B)$, 
	$$ \lim_{R\to\infty}\int_{ B^k}\tilde \tau_k(x_1, ..., x_k)dx_1...dx_k=\Big(\alpha_0 m(B)\int_A udu\Big)^k,$$
	for any integer $k\geq1$. 
	
	To this end, we first define two  subsets of $\Omega^k$, on which we can obtain some upper and lower bounds for $\tilde{\tau}_k$.  Define $$ \Omega_k=\frac{1}{R}\Big\{(x_1,..,x_k)\in [0,R]^k: \,\min_{i\neq j}|x_i-x_j| \geq R^{1/4}\Big \}, $$ 
	and let $$(\Omega_k)^c=[0,1]^k\backslash \Omega_k.$$ 
	By the definition of $\tilde{\mathcal{S}}_{R,A}$, we have \beq\label{idds}\tilde{\tau}_k \equiv 0\,\, \,\mbox{on} \,\,\,(\Omega_k)^c.\eeq
	We now define another subset of ${\Omega}^k$ as, 
	$$ \tilde{\Omega}_k=\frac{1}{R}\Big\{(x_1,..,x_k)\in [0,R-d_R]^k: \,\min_{i\neq j}|x_i-x_j| \geq R^{1/4}+d_R \Big\}.$$
	Given any $(x_1,\ldots, x_k)\in \tilde{\Omega}_k$,  we set
	$$
	D_2(x_1,\ldots, x_k )= \bigcup_{i=1}^k  [Rx_i-d_R, Rx_i+d_R],
	$$
	and
	$$
	D_3(x_1,\ldots, x_k )= \bigcup_{i=1}^k  \Big\{(z_1,z_2):0\leq z_2-z_1\leq d_R, \abs{z_1-Rx_i}<R^{1/4}\Big\}.
	$$

	\begin{lem}\label{lem:upplow0}
	We have the following upper bound on $\Omega_k$,
	\begin{equation}\label{uppbd}
		\begin{split}&\tilde{\tau}_k(x_1,\ldots, x_k)  \\   \leq  &  R^{k}
			\int_{Rx_1+A_R} \cdots \int_{Rx_k+A_R} \rho_{2k}(Rx_1,y_1,\ldots, Rx_k,y_k)dy_1\cdots dy_k :=L(R),
		\end{split}
	\end{equation}where 
\begin{equation}\label{defar}
		A_R:=R^{-1/2}A= \big\{R^{-1/2}x:x\in A\big\}.
\end{equation}
	We also  have the following lower bound on $\tilde{\Omega}_k$, 
	\begin{equation}\label{lbtilde}
		\begin{split}
			&	\quad\,\,\tilde{\tau}_k(x_1,\ldots, x_k)  \\&
			\geq  R^{k} 
			\int_{Rx_1+A_R} \cdots \int_{Rx_k+A_R} \rho_{2k}(Rx_1,y_1,\ldots, Rx_k,y_k)dy_1\cdots dy_k\\
			&-R^{k} 	\int_{Rx_1+A_R} \cdots \int_{Rx_k+A_R} \int_{D_2} \rho_{2k+1}(Rx_1,y_1,\ldots, Rx_k,y_k,z)dy_1\cdots dy_k dz\\
			&-R^{k} 	\int_{Rx_1+A_R} \cdots \int_{Rx_k+A_R} \int_{D_3} \rho_{2k+2}(Rx_1,y_1,\ldots, Rx_k,y_k,z_1,z_2)dy_1\cdots dy_k dz_1dz_2
			\\ &:= L(R)-E_2(R)-E_3(R).
		\end{split}
	\end{equation}

\end{lem}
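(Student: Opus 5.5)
We propose to work directly with the definition of correlation functions through expectations of counting measures on products of small test sets, and then to compare indicator functions pointwise. The process $\tilde{\mathcal S}_{R,A}$ lives on $[0,1]$ and is obtained from $Z_R$ by rescaling by $1/R$. Hence for disjoint small intervals $I_1,\dots,I_k\subset[0,1]$ with $(x_1,\dots,x_k)$ in their product,
$$\mathbb E\,\big|\tilde{\mathcal S}_{R,A}^{(k)}(I_1\times\cdots\times I_k)\big|=\mathbb E\sum_{(u_1,\dots,u_k)\in Z_R^{(k)},\,u_i\in RI_i}\prod_{i=1}^k \mathbf 1\{u_i^+-u_i\in A_R\}\,g(u_i),$$
where $u_i^+$ denotes the successor zero of $u_i$ and $g(u_i)$ is the indicator $g$ from \eqref{indic}. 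We then differentiate in the $x_i$; the Jacobian of the rescaling produces the factor $R^k$. The whole proof reduces to bounding the summand from above and below by sums over tuples of zeros whose expectations are given by the correlation functions $\rho_{2k},\rho_{2k+1},\rho_{2k+2}$.

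\emph{Upper bound.} We drop the factor $g$. The event $u_i^+-u_i\in A_R$ implies that there exists a zero $y_i\in u_i+A_R$, so
$$\prod_i \mathbf 1\{u_i^+-u_i\in A_R\}\le \prod_i \#\{y_i\in Z\cap(u_i+A_R)\}.$$
On $\Omega_k$ the points $Rx_i$ are separated by at least $R^{1/4}>2c_1R^{-1/2}$ for large $R$, and $A_R\subset(0,d_R)$. Hence the $2k$ points $u_1,y_1,\dots,u_k,y_k$ are pairwise distinct. The expectation of the sum over such $2k$-tuples is then
$$\int\rho_{2k}(u_1,y_1,\dots,u_k,y_k)\,dy\,du,$$
which gives \eqref{uppbd} after shrinking the $I_i$. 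Outside $\Omega_k$ we have $\tilde\tau_k\equiv0$ by \eqref{idds}, so there is nothing to prove there.

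\emph{Lower bound.} On $\tilde\Omega_k$ we use the pointwise inequality
$$\mathbf 1\{u_i^+-u_i\in A_R\}\,g(u_i)\;\ge\;\#\{y\in Z\cap(u_i+A_R)\}-(\text{error counts}).$$
Fix the $u_i$ and candidate $y_i$, and consider the product of counts $\prod_i\#\{y_i\}$. This product fails to equal the desired product of indicators in only two situations.

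First, some $y_i$ is not the successor of $u_i$, or there is more than one candidate $y_i$. Either way a further zero $z\neq u_j,y_j$ lies in $[u_i,u_i+d_R]\subset D_2$.

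Second, $g(u_i)=0$. Then there is a pair of consecutive zeros $z_1<z_2$ with $z_2-z_1\le d_R$ and $|z_1-u_i|\le R^{1/4}$, with $z_1\ne u_i$. Since $u_i$ is near $Rx_i$, this pair lies in $D_3$ up to the $d_R$-enlargement built into the definition of $\tilde\Omega_k$. If $\{z_1,z_2\}$ meets the set $\{u_j,y_j\}$, for instance $z_1=y_i$ and $z_2$ is a further zero, then we are back in the first situation with an extra zero in $D_2$. Otherwise $(z_1,z_2)$ is a new pair distinct from the $u_j,y_j$.

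Bounding each failing configuration's indicator by the number of witnessing extra points gives
$$\prod_i\mathbf 1\{\cdot\}\,g\;\ge\;\prod_i\#\{y_i\}-\sum_{z\in Z\cap D_2}\prod_i\#\{y_i\}-\sum_{(z_1,z_2)\in D_3}\prod_i\#\{y_i\}.$$
Taking expectations via the Kac--Rice correlation functions yields $L(R)-E_2(R)-E_3(R)$. The inequality is valid pointwise: the right-hand side is at most $0$ whenever the left side is $0$, and it is at most $1$ when the left side is $1$.

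The main obstacle is this careful case analysis. One must verify that every way the left side can vanish while the first product is positive produces at least one $z\in D_2$ distinct from the chosen points, or one ordered pair in $D_3$ distinct from them. One must also check that the geometric margins in $\tilde\Omega_k$, namely the separation $R^{1/4}+d_R$ and the restriction to $[0,R-d_R]$, ensure the relevant points stay in the domains and remain distinct, so that the tuples counted correspond to $\rho_{2k+1}$ and $\rho_{2k+2}$.
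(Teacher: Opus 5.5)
Your strategy is the paper's: compare $\prod_i\mathcal S_{R,A}(I_i)$ pointwise with counts of $2k$-, $(2k+1)$- and $(2k+2)$-tuples of distinct zeros over shrinking boxes, take expectations, and let the boxes shrink. Your upper bound is fine.

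The gap is in the lower-bound case analysis. You claim that if the witness pair for $g(u_i)=0$ meets the chosen points, ``for instance $z_1=y_i$ and $z_2$ is a further zero,'' then you are back in the first situation with an extra zero in $D_2$. That is false. In that case $z_2=y_i^+\in(y_i,y_i+d_R]$, which can lie in $(u_i+d_R,u_i+2d_R)$, i.e.\ outside $D_2=\bigcup_j[Rx_j-d_R,Rx_j+d_R]$. Since the pair $(y_i,z_2)$ contains the chosen point $y_i$, it is not counted by the $\rho_{2k+2}$ term either.

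Here is a concrete counterexample with $k=1$. Take zeros $u<y<v$ with $u\in[Rx_1,Rx_1+\delta]$, $y-u\in A_R$, $v-y\le d_R$, $v-u>d_R$, and no other zeros within $2R^{1/4}$ of $u$. Then $g(u)=0$ (take $x_j=y$, $x_{j+1}=v$ in \eqref{indic}), so the left side of your inequality is $0$. On the other hand $\#(Z\cap(u+A_R))=1$ and both correction counts vanish, so the right side is $1$. Such configurations have probability density of order $d_R^5$ (an integral of $\rho_3$ near the diagonal over a region of area $\asymp d_R^2$). That is the same order as the $\rho_3$-integral defining $E_2$, so they cannot be dismissed, and the lower bound with this $D_2$ does not follow from your argument.

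Also, the $R^{1/4}+d_R$ margin in $\tilde\Omega_k$ does not serve to put the witness pair into $D_3$. Its role is to prevent the witness for $g(u_i)=0$ from being built from $u_j$ or $y_j$ with $j\neq i$; it does nothing for the case $j=i$.

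The paper's proof simply asserts $\prod_i\mathcal S_{R,A}([w_i,w_i+\delta])\ge|D|-|D'|-|D''|$ and misses the same configuration. So this is a defect of the argument itself, not a departure from the paper.

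The repair is easy: enlarge $D_2$ to $\bigcup_i[Rx_i-d_R,Rx_i+2d_R]$, so that $y_i^+$ is always an admissible extra zero. The proof that $E_2(R)\to0$ via \eqref{mainsplit} and \eqref{unbd} goes through verbatim: the $\rho_3$-integral is still $O(d_R^5)$, so $E_2=O(R^{-3/2})$. Lemma \ref{bigdistance} and Theorem \ref{main1} are therefore unaffected.

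The remaining boundary effects from $u_i$ being only within $\delta$ of $Rx_i$ (for example $[u_i,u_i+d_R]\not\subset D_2$ when $u_i>Rx_i$) are harmless: they contribute $o(\delta^k)$ and vanish in the limit.
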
 

\begin{proof}[Proof of Lemma \ref{lem:upplow0}]
We consider the scaled point process $$
\mathcal S_{R,A}:=R\tilde{\mathcal S}_{R,A}=\Big\{  x_i\in Z_R: x_{i+1}-x_i \in R^{-1/2}A \mbox{ and }g_i=1\Big\},
$$
and denote by $\tau_k$ its $k$-point correlation function. Then we have the relation
\begin{equation}\label{scale}
	\tilde{\tau}_k(x_1,\ldots, x_k)=
	R^k \tau_k(Rx_1,\ldots,Rx_k).
\end{equation}
Given any $\delta>0$ and $w_1,\ldots, w_k \in R\Omega_k$, we define a set $D(w_1,\ldots, w_k, \delta)$ to be
\begin{equation*}
	\begin{split}
\big\{
(x_1,y_1,\ldots,x_k,y_k)\in Z_R^{(2k)}:\forall \leq  i\leq k,\,
x_i\in [w_i,w_i+\delta],
y_i\in x_i+A_R
\big\}.
\end{split} 
\end{equation*}
For $\delta$ small, we have
\begin{equation*}
	\Pi_{i=1}^k\mathcal{S}_{R,A}\left([w_i,w_i+\delta]\right) \leq \abs{D(w_1,\ldots, w_k, \delta)}.
\end{equation*}
Consequently, 
\begin{equation*}
	\begin{split}
	\tau_k(w_1,\ldots, w_k)&\leq 
\limsup_{\delta \to 0}	\frac{1}{\delta^k}
\E\abs{D(w_1,\ldots, w_k, \delta)}\\
&=	\int_{w_1+A_R} \cdots \int_{w_k+A_R} \rho_{2k}(w_1,y_1,\ldots, w_k,y_k)dy_1\cdots dy_k.
\end{split}
\end{equation*}
Equation \eqref{uppbd} now follows from 
this and the scaling relation \eqref{scale}.
The lower bound \eqref{lbtilde} can be proved similarly by observing that
\begin{equation*}
	\begin{split}
\Pi_{i=1}^k\mathcal{S}_{R,A}\left([w_i,w_i+\delta]\right) \geq &\abs{D(w_1,\ldots, w_k, \delta)}\\
&-\abs{D'(w_1,\ldots, w_k, \delta)}
- \abs{D''(w_1,\ldots, w_k, \delta)},
\end{split} 
\end{equation*}
where the two sets $D'(w_1,\ldots, w_k, \delta)$ and $D''(w_1,\ldots, w_k, \delta)$ are given by
\begin{equation*}
	\begin{split}
		\Big\{
		(x_1,y_1,\ldots,x_k,y_k,z)\in Z_R^{(2k+1)}:& \forall\, 1\leq i\leq k, 
		x_i\in [w_i,w_i+\delta],
		y_i\in x_i+A_R ;\\
		& z\in D_3\left(w_1/R, \ldots, w_k/R\right)
		\Big\},
	\end{split}
\end{equation*}
and
\begin{equation*}
	\begin{split}
		\Big\{(x_1,y_1,\ldots,x_k,y_k,z_1,z_2)\in Z_R^{(2k+2)}: &\forall 1\leq  i\leq k, 
		x_i\in [w_i,w_i+\delta],\\
		y_i\in x_i+A_R;  \,
		& (z_1,z_2)\in D_4\left(w_1/R, \ldots, w_k/R\right) \Big\},
	\end{split}
\end{equation*}
 respectively. This implies \eqref{lbtilde}.
\end{proof}

	By \eqref{uppbd} and \eqref{lbtilde}, 
	we can prove the following lemma. 
	\begin{lem}\label{bigdistance}
		There exists a constant  $C>0$ such that   
		\begin{equation}\label{unbd2}
			\tilde{\tau}_k(x_1,\ldots, x_k)\leq C\,\, \mbox{ on }\,\,\Omega_k.
		\end{equation}
		Moreover, as $R\to\infty$, one has the convergence,
		\beq\label{spera}
		\tilde{\tau}_{k}^{}\left(x_{1}, \ldots, x_{k}\right) \to \left(\alpha_0 \int_{A} u{d} u\right)^{k}\,\, \mbox{ uniformly on }\,\,\tilde{\Omega}_k.  \eeq 
	\end{lem}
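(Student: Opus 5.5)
The argument runs entirely through the two-sided bounds of Lemma \ref{lem:upplow0}: we show that the main term $L(R)$ is uniformly bounded on $\Omega_k$ and tends to $(\alpha_0\int_A u\,du)^k$ uniformly on $\tilde\Omega_k$, while $E_2(R)$ and $E_3(R)$ tend to $0$ uniformly on $\tilde\Omega_k$.

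\emph{Uniform bound \eqref{unbd2}.} On $\Omega_k$ the points $Rx_i$ are pairwise separated by at least $R^{1/4}$, and each $y_i$ lies within $c_1R^{-1/2}$ of $Rx_i$. For $R$ large, the blocks $\{Rx_i,y_i\}$ are therefore separated by more than $\eta=R^{1/4}/2$. The splitting property \eqref{mainsplit} with the partition into pairs gives
$$\rho_{2k}(Rx_1,y_1,\ldots,Rx_k,y_k)=\prod_{i=1}^k\rho_2(Rx_i,y_i)\bigl(1+O(\|\kappa\|_{2k,\eta}^{1/2})\bigr),$$
and the error tends to $0$ uniformly by \eqref{rapiddecay}. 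By stationarity $\rho_2(Rx_i,y_i)=\rho_2(0,y_i-Rx_i)$, and by \eqref{unbd} this is at most $C|y_i-Rx_i|$. Hence
$$L(R)\le C\prod_{i=1}^k R\int_{A_R}|u|\,du = C\Bigl(\int_A u\,du\Bigr)^k,$$
using the substitution $u=R^{-1/2}v$, whose Jacobian $R^{-1/2}$ and the factor $|u|=R^{-1/2}v$ together cancel the factor $R$. The upper bound \eqref{uppbd} then gives \eqref{unbd2}.

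\emph{Limit of the main term.} After the same substitution,
$$R\int_{A_R}\rho_2(0,u)\,du=\int_A \frac{\rho_2(0,R^{-1/2}v)}{R^{-1/2}v}\,v\,dv .$$
By \eqref{alp0} the integrand converges to $\alpha_0 v$, and it is dominated by $Cv$ via \eqref{unbd}, so dominated convergence gives the limit $\alpha_0\int_A v\,dv$. The convergence does not depend on $x$, so $L(R)\to(\alpha_0\int_A u\,du)^k$ uniformly on $\Omega_k\supseteq\tilde\Omega_k$. It remains to show that $E_2$ and $E_3$ vanish uniformly on $\tilde\Omega_k$; this is the main obstacle, because it requires the splitting property to be applied with the correct cluster partitions.

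\emph{The error term $E_2$.} Here $z\in D_2$ lies within $d_R$ of some $Rx_j$. On $\tilde\Omega_k$ the cluster $\{Rx_j,y_j,z\}$ is separated from the other clusters by at least $R^{1/4}/2$, so \eqref{mainsplit} factorizes $\rho_{2k+1}$ into $\prod_{i\ne j}\rho_2$ times $\rho_3(Rx_j,y_j,z)$. By \eqref{unbd},
$$\rho_3(Rx_j,y_j,z)\le C|y_j-Rx_j|\,|z-Rx_j|\,|z-y_j|\le C\,|y_j-Rx_j|\,d_R^2 .$$
Integrating over $z\in[Rx_j-d_R,Rx_j+d_R]$ contributes an extra factor $d_R$, so this cluster contributes $O(d_R^3)=O(R^{-3/2})$ relative to its $\rho_2$ term. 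Summing over $j$ shows $E_2=O(R^{-3/2})\to0$.

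\emph{The error term $E_3$.} Here $(z_1,z_2)$ satisfies $0\le z_2-z_1\le d_R$ and $|z_1-Rx_j|<R^{1/4}$. Separation from the other clusters again holds on $\tilde\Omega_k$, since the $x_i$ are separated by $R^{1/4}+d_R$. Within the block $j$, the four points $\{Rx_j,y_j,z_1,z_2\}$ are handled in two cases.
\begin{itemize}
\item If $|z_1-Rx_j|\ge\eta_R:=R^{1/8}$, a further split into the pairs $\{Rx_j,y_j\}$ and $\{z_1,z_2\}$ gives the bound $C\rho_2(Rx_j,y_j)\,|z_2-z_1|$.
\item If $|z_1-Rx_j|<\eta_R$, we use the crude bound $\rho_4\le C|y_j-Rx_j|\,|z_2-z_1|$ from \eqref{unbd}.
\end{itemize}
In both cases the $(z_1,z_2)$ integral is at most $C R^{1/4}d_R^2=CR^{-3/4}$. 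Hence $E_3=O(R^{-3/4})\to0$ uniformly. (In fact the first splitting is unnecessary, since the crude bound \eqref{unbd} with $\min(\cdot,1)$ already suffices throughout.)

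Combining the three estimates with the upper and lower bounds of Lemma \ref{lem:upplow0} yields \eqref{spera}.
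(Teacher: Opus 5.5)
Your overall strategy matches the paper's: you squeeze $\tilde\tau_k$ between $L(R)$ and $L(R)-E_2(R)-E_3(R)$, use \eqref{mainsplit} and \eqref{alp0} for the main term, and show that the error terms vanish. Your treatment of \eqref{unbd2}, of $L(R)$ and of $E_2$ is fine. There is, however, a false claim in your $E_3$ step. On $\tilde\Omega_k$ the block $\{Rx_j,y_j,z_1,z_2\}$ is \emph{not} necessarily separated from the other blocks. Membership in $D_3$ only gives $|z_1-Rx_j|<R^{1/4}$, and $\tilde\Omega_k$ only gives $|Rx_i-Rx_j|\ge R^{1/4}+d_R$, so $|z_1-Rx_i|$ is only guaranteed to exceed $d_R\to0$.

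For example, take $Rx_1=0$, $Rx_2=R^{1/4}+d_R$ and $z_1=R^{1/4}-d_R/2$. Then $(z_1,z_2)$ lies within $O(d_R)$ of $Rx_2$, and no application of \eqref{mainsplit} is justified. This rules out both the clusters $\{Rx_1,y_1,z_1,z_2\},\{Rx_2,y_2\}$ and your further pair-splitting in the case $|z_1-Rx_1|\ge R^{1/8}$. This is exactly why the paper splits $D_3$ into two pieces. On $D_4$ one has $|z_1-Rx_i|\le R^{1/4}/2$ for some $i$, hence distance at least $R^{1/4}/2+d_R$ from every other $Rx_{i'}$. On $D_5$ the pair $\{z_1,z_2\}$ is far from all $Rx_i$ and forms its own cluster.

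Your parenthetical remark is the correct repair and should replace the two-case argument. Since \eqref{unbd} holds for every configuration off the diagonal, you can drop every factor $\min(\cdot,1)$ except those for the pairs $(Rx_i,y_i)$ and $(z_1,z_2)$. This gives
\[
\rho_{2k+2}(Rx_1,y_1,\ldots,Rx_k,y_k,z_1,z_2)\le C\prod_{i=1}^k|y_i-Rx_i|\,|z_2-z_1|,
\]
and hence $E_3(R)\le C\bigl(\int_A u\,du\bigr)^k\, R^{1/4}d_R^2=O(R^{-3/4})$ uniformly, with no separation needed. With that change your proof is correct, and it is leaner than the paper's. The same global bound handles $E_2$, and it is how the paper itself proves \eqref{unbd2}, for all $R$ and without splitting. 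So the splitting property is genuinely needed only to identify the limit of $L(R)$.
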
 
	\begin{proof}
		We first prove \eqref{unbd2}. 
		Recall the definitions of $d_R$  and $A_R$ in \eqref{dr} and \eqref{defar}. We have $0\leq x\leq d_R$ for $x\in A_R$. Therefore,
		by \eqref{unbd} and \eqref{uppbd}, we have
		\begin{align*}
			\tilde{\tau}_k(x_1,\ldots, x_k)&\leq C R^k \int_{Rx_1+A_R}\abs{Rx_1-y_1}dy_1 \cdots \int_{Rx_k+A_R}\abs{Rx_k-y_k}dy_k\\
			&\leq C R^k \int_{A_R} d_R dy_1 \cdots \int_{A_R} d_R dy_1\\
			&\leq C R^k (R^{-1})^k =C. 		 
		\end{align*}
		%corresponding to $m=0$ in \eqref{inex} gives the expected asymptotic estiamte.
		We now turn to proving \eqref{spera}.  By the upper bound \eqref{uppbd}  and the lower bound \eqref{lbtilde}, it is sufficient to prove the following uniform 
		convergences on $\tilde{\Omega}_k$,   $$\lim_{R\to\infty}L(R)= \left(\alpha_0 \int_{A} u{d} u\right)^{k}  \,\,\mbox{and}\,\,\lim_{R\to\infty} E_2(R)=\lim_{R\to\infty}E_3(R)=0.$$ 
		To find the limit of $L(R)$, given $(x_1, \dots, x_k) \in \tilde{\Omega}_k$ and $y_i\in x_i+A_R$ ($1\leq i\leq k$), we can partition the $2k$ points $Rx_1,\ldots, Rx_k, y_1,\ldots, y_k$ into 
		$k$ clusters $\mathcal{I} = \{\{Rx_1, y_1\}, \dots, \{Rx_k, y_k\}\}$. 
		The distance between any two clusters of $\mathcal{I}$ is at least $R^{1/4}$. 
		Thus, using \eqref{mainsplit}, we obtain the following uniform estimate:
		$$
		\rho_{2k}(Rx_1, .., Rx_k, y_1,.., y_k)=\rho_2(Rx_1, y_1)\cdots \rho_2(Rx_k, y_k)\left(1+O\left(\left(\|\kappa\|_{2k, R^{1/4}}\right)^{\frac{1}{2}}\right)\right).$$
		%Here we only need the derivatives of $\kappa$ up to oder 4, as illustrated by the divided differences,  in our case of cluster of pairs, we only need to consider the Kac-Rice formula involved Gaussian random vectors $(f(x_i), f'(x_i), f''(x_i))$ for $i=1,.., k$. 
		Now we study the integration of the 2-point correlation function.  By the stationarity of the Gaussian process, for any $x\in \mathbb{R}$, we have 
		\beq \label{dsdsdsds}\int_{ x+R^{-1/2}A} \rho_2(x,y)dy =  \int_{ R^{-1/2}A}\rho_2(0,y)dy= R^{-1/2}\int_{ A}\rho_2(0,R^{-1/2}y)dy.\eeq
		Applying  the  limit \eqref{maincon} for the 2-point correlation function, we obtain  $ \rho_2(0, R^{-1/2}y)\simeq \alpha_0 R^{-1/2}y$ uniformly in $y\in A$ for sufficiently large $R$, and $\alpha_0$ is equal to $\ell(0)$  as given in \eqref{alp0}.
		Hence, by \eqref{dsdsdsds}, we further have 
		\beq\label{caserho2}\begin{split}\lim_{R\to\infty}R
			\int_{y\in x+R^{-1/2}A} \rho_2(x,y)dy=&\lim_{R\to\infty}R^{1/2}\int_{ A}\rho_2(0,R^{-1/2}y)dy\\=&\alpha_0  \int_{y\in A}ydy.\end{split} \eeq
		Therefore, we have the following uniform convergence
		for  $(x_1,\ldots, x_k)\in \tilde{\Omega}_k$, 
		\begin{equation}\label{mainterm}
			\begin{split}
				& \lim_{R\to\infty}R^k \int_{(Rx_{1}+A_{R})\times  \cdots  \times ({Rx_{k}+A_{R})}} \rho_{2k}(Rx_1, .., Rx_k, y_1,.., y_k){d} y_{1}  \cdots {d} y_{k}\\=&\lim_{R\to\infty} \Big[\prod_{i=1}^kR \int_{Rx_{i}+A_{R}}\rho_2(Rx_i, y_i)dy_i \Big]\left(1+O\left(\left(\|\kappa\|_{2k, R^{1/4}}\right)^{\frac{1}{2}}\right)\right)\\=& \left(\alpha_0 \int_{A} u^{} d u\right)^{k},
			\end{split}
		\end{equation}
		where in the last step, one has $ \|\kappa\|_{2k, R^{1/4}}\to 0$ as $R\to\infty$ by the assumption \eqref{rapiddecay}.   
		This gives the limit of $L(R)$, as desired. 
		
		% For simplicity, we denote by $E_2$ and $E_3$ the third line and the fourth line (neglecting the minus sign) in \eqref{lbtilde}, respectively. 
		%Now we denote the last two integrations in  \eqref{lbtilde} as, 
		%$$E_2=R^{k} 	\int_{Rx_1+A_R} \cdots \int_{Rx_k+A_R} \int_{D_2} \rho_{2k+1}(Rx_1,y_1,\ldots, Rx_k,y_k,z)dy_1\cdots dy_k dz$$
		%and $$E_3=R^{k} 	\int_{Rx_1+A_R} \cdots \int_{Rx_k+A_R} \int_{D_3} \rho_{2k+2}(Rx_1,y_1,\ldots, Rx_k,y_k,z_1,z_2)dy_1\cdots dy_k dz_1dz_2.$$
		%In the followings,  we will  prove that $E_2$ and $E_3$ tend to 0 as $R\to\infty$, which are asymptotically negligible compared to the limit of $L$.  And thus we complete the proof of \eqref{spera}. 
		
		Now we consider the error term $E_2(R)$. Without loss of generality, we may assume $z\in [Rx_k-d_R, Rx_k+d_R]$.
		And thus we can divide the $2k+1$ points $Rx_1,y_1,\ldots, Rx_k,y_k,z$ into $k$ clusters
		$
		\mathcal{I}=\{\{Rx_1,y_1\},\ldots,\{Rx_k,y_k,z\}  \}.
		$
		For any two clusters in $\mathcal{I}$, their distance is at least $R^{1/4}-d_R\geq R^{1/4}/2$.  Consequently, by the splitting property \eqref{mainsplit} again, one has
		\begin{equation}\label{2k+1}
			\begin{split}
				&\rho_{2k+1}(Rx_1,y_1,\ldots, Rx_k,y_k,z)\\
				=&\rho_3(Rx_k,y_k,z)\prod_{i=1}^{k-1}\rho_2(Rx_i,y_i) \left(1+O\left(\left(\|\kappa\|_{2k+1, R^{1/4}/2}\right)^{\frac{1}{2}}\right)\right)\\
				\leq  & C \rho_3(Rx_k,y_k,z)\prod_{i=1}^{k-1}\rho_2(Rx_i,y_i).
			\end{split}
		\end{equation}
		The estimate for $\E|D_0|$ in \eqref{d0} implies that the integration of $R\rho_3(Rx_k,y_k,z)$ (with respect to $y_k$ and $z$) tends to 0 uniformly in $x_k\in [0,1]$, while \eqref{caserho2} shows that the integration of $R\rho_2(Rx_i,y_i)$ (with respect to $y_i$) gives rise to a uniformly bounded factor.
		Hence, $E_2(R)$ will converges to 0 as $R\to\infty$, uniformly in $x_1,\ldots, x_k$.

		To estimate $E_3(R)$,  we further decompose $D_3(x_1,\ldots, x_k)$ into two disjoint sets, 
		$$
		D_4(x_1,\ldots, x_k):=\bigcup_{i=1}^k 
		\Big\{(z_1,z_2):  z_1\in [Rx_i-R^{1/4}/2, Rx_i+R^{1/4}/2],
		0\leq z_2-z_1\leq d_R\Big\},
		$$ and
		$$
		D_5(x_1,\ldots, x_k):=D_3(x_1,\ldots, x_k)\backslash D_4(x_1,\ldots, x_k).
		$$
		%We now estimate the integration of $\rho_{2k+2}(Rx_1,y_1,\ldots, Rx_k,y_k,z_1,z_2)$ in $E_3$ according to $(z_1,z_2)\in D_4$ or $(z_1,z_2)\in D_5$. 
		And thus we can rewrite 
		\begin{align*}E_3(R)=&R^{k} 	\int_{Rx_1+A_R} \cdots \int_{Rx_k+A_R} \int_{D_4} \rho_{2k+2}(Rx_1,y_1,\ldots, Rx_k,y_k,z_1,z_2) \cdots \\ &+R^{k} 	\int_{Rx_1+A_R} \cdots \int_{Rx_k+A_R} \int_{D_5} \rho_{2k+2}(Rx_1,y_1,\ldots, Rx_k,y_k,z_1,z_2) \cdots \\ :=&E_3'(R)+E_3''(R)
		\end{align*}
		For the error term $E_3'(R)$, without loss of generality,  we may assume $z_1\in [Rx_k-R^{1/4}/2, Rx_k+R^{1/4}/2]$. Then we have a partition
		$
		\mathcal{I}=$ $\{\{Rx_1,y_1\},\ldots,\{Rx_k,y_k,z_1,z_2\} \},
		$
		and for any two clusters in $\mathcal{I}$, their distance is at least $R^{1/4}/2-2d_R\geq R^{1/4}/3$. Similarly to \eqref{2k+1}, we have  
		$$
		\rho_{2k+2}(Rx_1,y_1,\ldots, Rx_k,y_k,z_1, z_2)
		\leq   C \rho_4(Rx_k,y_k,z_1,z_2)\prod_{i=1}^{k-1}\rho_2(Rx_i,y_i).
		$$
		The analysis for $\E|D_1|$ in \eqref{d1} implies  
		$$
		R\int_{Rx_k-d_R}^{Rx_k+d_R}dy_k \int_{Rx_k-R^{1/4}/2}^{Rx_k+R^{1/4}/2}dz_1 \int_{z_1-d_R}^{z_1+d_R} \rho_4(Rx_k,y_k,z_1,z_2) dz_2\leq C R^{-3/4},
		$$
		which goes to 0 as $R\to\infty$. 
		As before,  the integration of $R\rho_2(Rx_i,y_i)$ is bounded for all $i=1,.., k-1$, i.e., 
		$$
		R^{k-1}\int_{Rx_1-d_R}^{Rx_1+d_R}	\cdots  \int_{Rx_k-d_R}^{Rx_k+d_R}	\prod_{i=1}^{k-1}\rho_2(Rx_i,y_i) dy_1\cdots dy_{k-1}\leq C.
		$$
		Combining the above two upper bounds, we conclude that $E_3'(R)$ tends to 0.
		
		For the error term $E_3''(R)$, we have a partition  
		$
		\mathcal{I}=\{\{Rx_1,y_1\},\ldots,\{Rx_k,y_k\},\{z_1,z_2\}\}.
		$ In addition, the distance between any two clusters is at least $R^{1/4}/2-2d_R\geq R^{1/4}/3$. Therefore, we get
		$$
		\rho_{2k+2}(Rx_1,y_1,\ldots, Rx_k,y_k,z_1, z_2)
		\leq   C\rho_2(z_1,z_2) \prod_{i=1}^{k}\rho_2(Rx_i,y_i) .
		$$
		Again, we have
		$$
		R^k\int_{Rx_1-d_R}^{Rx_1+d_R}	\cdots  \int_{Rx_k-d_R}^{Rx_k+d_R}	\prod_{i=1}^{k}\rho_2(Rx_i,y_i) dy_1\cdots dy_k\leq C.
		$$
		For the integration of $\rho_2(z_1,z_2)$, we have the upper bound
		\begin{align*} 
			\int_{D_5} \rho_2(z_1,z_2)dz_1dz_2 
			&\leq \sum_{i=1}^k \int_{Rx_i-R^{1/4}}^{Rx_i+R^{1/4}}dz_1 \int_{z_1-d_R}^{z_1+d_R}
			\rho_2(z_1,z_2)dz_2\\
			&\leq C R^{1/4} R^{-1}=O(R^{-3/4}).
		\end{align*}
		The above two upper bounds will imply that $E_3''(R)$ also tends to 0. Therefore, as $R\to\infty$, 
		we have
		$E_3(R)$ tends to 0 uniformly in $(x_1,\ldots, x_k)\in \tilde{\Omega}_k$. This completes the proof of Lemma \ref{bigdistance}. 
	\end{proof}  
	Lemma \ref{bigdistance} will imply \eqref{tilderhok} directly. Indeed, we now show that 
	\beq\label{finald}\begin{split}&\lim_{R\to\infty}\int_{B^k} \tilde\tau_k(x_1,.., x_k)dx_1\cdots dx_k\\ =&\lim_{R\to\infty}\int_{B^k \cap  \tilde{\Omega}_k} +\lim_{R\to\infty}\int_{B^k\cap( \Omega_k \backslash \tilde{\Omega}_k) }+\lim_{R\to\infty}\int_{B^k\cap( \Omega_k )^c} \\=& \left(\alpha_0 m(B)\int_A udu\right)^k.\end{split}\eeq
	To prove this,  by the limit \eqref{spera} of $ \tilde\tau_k(x_1,.., x_k)$ on  $\tilde{\Omega}_k$ 
	in Lemma \ref{bigdistance}, we first have $$\lim_{R\to\infty}\int_{B^k \cap  \tilde{\Omega}_k}  \tilde\tau_k(x_1,.., x_k)dx_1\cdots dx_k=\left(\alpha_0 m(B)\int_A udu\right)^k,$$
	where we have used the fact that  $\tilde{\Omega}_k$ tends to the whole space $[0,1]^k$ as $R\to\infty$. 
	Recall \eqref{unbd2} in Lemma \ref{bigdistance}, where $\tilde{\tau}_k$ is uniformly bounded on ${\Omega}_k$. This, combined with the fact that the Lebesgue measure of the set $\Omega_k \backslash \tilde{\Omega}_k$ converges to 0 as $R\to\infty$, implies that
	$$\lim_{R\to\infty}\int_{B^k\cap( \Omega_k \backslash \tilde{\Omega}_k) } \tilde\tau_k(x_1,.., x_k)dx_1\cdots dx_k=0.$$
	Recall \eqref{idds},  where  $\tilde\tau_k\equiv0$ on $( \Omega_k )^c$. This implies  $$\lim_{R\to\infty}\int_{B^k\cap( \Omega_k )^c} \tilde\tau_k(x_1,.., x_k)dx_1\cdots dx_k=0.$$
	Therefore, we have established \eqref{finald}, which completes the proof of Theorem \ref{main1}.
	% $\tilde\tau_k$ is uniformly bounded on  $\tilde{\Omega}_k$,  the Lebesgue measures of the sets $(\tilde{\Omega}_k)^c$ and $\Omega_k \backslash\tilde{\Omega}_k$ both converge to 0 as $R\to\infty$ and $\tilde\tau_k\equiv 0$ on $(\Omega_k )^c$.  
	
	\section*{Acknowledgements}
	We thank the anonymous referees for helpful comments. Dong Yao is supported by
	National Key R$\&$D program of China (No. 2023YFA1010101),
	 NSFC grant (No. 122011256),
	NSF of Jiangsu Province grant (No. BK20220677),
	and Jiangsu Normal University Start-up grant 
	(No. 21XFRX018).

\end{document}